\theoremstyle{theorem}
\newtheorem{lemma}{Lemma} 
\theoremstyle{definition}
 \newtheorem{thm}{Theorem}[section]
 \theoremstyle{definition}
 \theoremstyle{remark}
 \newtheorem{rem}[thm]{Remark}
 \numberwithin{equation}{section}
\begin{document}

\title[Urysohn's Lemma and the Tietze Extension Theorem]
 {Proofs of Urysohn's Lemma and the Tietze Extension Theorem via the Cantor function}

\author[]{Florica C. C\^{\i}rstea}

\address{
School of Mathematics and Statistics\\
The University of Sydney\\
NSW 2006\\
Australia}

\email{florica.cirstea@sydney.edu.au}

\subjclass{Primary 54D15; Secondary 54C05, 54C99}

\keywords{Urysohn's Lemma, normal space, Cantor set}

\date{\today}

\begin{abstract}
Urysohn's Lemma is a crucial property of normal spaces that deals with separation of closed sets 
by continuous functions. It is also a fundamental ingredient in proving the Tietze Extension Theorem, another property of normal spaces that deals with the existence of extensions of continuous functions. Using the Cantor function, we give alternative proofs for Urysohn's Lemma and the Tietze Extension Theorem. 
\end{abstract}

\maketitle

\section{Introduction}

Urysohn's Lemma provides the means for proving big theorems in topology such as Urysohn's metrization theorem 
(see Urysohn's final\footnote{At the age of only 26, he drowned while swimming in the ocean.} paper \cite{urysohn25}) and the Tietze Extension Theorem proved by Tietze \cite{tiet} for metric spaces and generalised by Urysohn \cite{uryso} to normal spaces. For further details, see \cite{fol}. Using the Cantor function, we   
give new proofs for Urysohn's Lemma (in Section~\ref{sec11}) and the Tietze Extension Theorem (in Section~\ref{sec3}).
Urysohn's Lemma, originating in the third appendix to Urysohn's paper \cite{uryso}, gives a property that  characterises normal spaces\footnote{A topological space $X$ is {\em normal} if every disjoint closed subsets of $X$ can be included in disjoint open sets.}. 

\begin{thm}[Urysohn's Lemma] \label{th1} If $A$ and $B$ are disjoint closed subsets of a 
	normal space $X$, then there exists a continuous function $f: X \to [0,1]$ such that $f=0$ on $A$ and $f=1$ on $B$.
\end{thm}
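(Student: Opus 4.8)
The plan is to use the normality of $X$ to construct a family of nested open sets indexed by the dyadic rationals in $[0,1]$, and then define $f$ as an infimum over these indices. Since this particular paper advertises a proof ``via the Cantor function,'' I expect the twist to be that the indexing set is not the full set of dyadic rationals but rather the endpoints of the intervals removed in constructing the Cantor set, exploiting the fact that the Cantor function is constant on each removed interval. I will describe the more standard dyadic approach, which captures the essential idea.

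\medskip

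First I would recall the key consequence of normality that will be iterated: if $C$ is closed, $U$ is open, and $C \subseteq U$, then there exists an open set $V$ with $C \subseteq V \subseteq \overline{V} \subseteq U$. This follows by applying normality to the disjoint closed sets $C$ and $X \setminus U$ to obtain disjoint open sets, and then taking closures. Let $D$ denote the dyadic rationals in $[0,1]$. I would construct open sets $\{U_r\}_{r \in D}$ satisfying the containment property
\begin{equation*}
A \subseteq U_r, \qquad U_1 = X \setminus B, \qquad \overline{U_r} \subseteq U_s \text{ whenever } r < s.
\end{equation*}
The construction proceeds by induction on the generation of dyadic rationals: I would set $U_1 = X \setminus B$, use normality to find $U_0$ with $A \subseteq U_0 \subseteq \overline{U_0} \subseteq U_1$, and then at each stage insert an open set for each new dyadic rational $r = (p+q)/2$ between its neighbours $p < q$ already defined, again invoking the interpolation lemma so that $\overline{U_p} \subseteq U_r \subseteq \overline{U_r} \subseteq U_q$.

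\medskip

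Next I would define $f : X \to [0,1]$ by
\begin{equation*}
f(x) = \inf \{ r \in D : x \in U_r \},
\end{equation*}
with the convention that $f(x) = 1$ if $x$ belongs to no $U_r$. It is immediate that $f = 0$ on $A$ (since $A \subseteq U_r$ for all $r$) and $f = 1$ on $B$ (since $x \in B$ lies in no $U_r$). The remaining work, and what I regard as the main obstacle, is proving continuity of $f$. I would verify that the preimages of the subbasic open rays $[0, a)$ and $(a, 1]$ are open, using the two identities
\begin{equation*}
\{ x : f(x) < a \} = \bigcup_{r < a} U_r, \qquad \{ x : f(x) > a \} = \bigcup_{r > a} \bigl( X \setminus \overline{U_r} \bigr).
\end{equation*}
Establishing these set equalities is the delicate step: it requires the density of the dyadic rationals together with the nesting $\overline{U_r} \subseteq U_s$ for $r < s$ to pass between a value of $f$ and membership in the sets $U_r$ and their complements. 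In the Cantor-function variant, I anticipate the analogous step is streamlined because the plateaus of the Cantor function absorb the density argument, with the monotone structure of the removed intervals doing the bookkeeping automatically.
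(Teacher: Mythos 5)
Your proof is correct, but it is the classical proof of Urysohn's Lemma --- the very argument the paper sets itself against in Remark~\ref{rem1} --- rather than the paper's Cantor-function proof, which your opening paragraph guesses at quite accurately but then abandons. The paper's route differs in three ways. First, it reduces to a \emph{connected} normal space $X$ by treating each connected component separately; connectedness makes every inclusion $\overline{U_p}\subset U_q$ strict. Second, its family of open sets is indexed not by the dyadic rationals but by the set $\mathcal L$ of left endpoints of the intervals removed in building the Cantor set, a set dense in the Cantor set but \emph{not} dense in $[0,1]$, so your ``delicate step'' cannot be run verbatim: density of the index set in $[0,1]$ is genuinely unavailable there. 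Third, the infimum function $g(x)=\inf\{p\in \mathcal L:\ x\in U_p\}$ takes values in the Cantor set and hence is \emph{never} continuous on a connected $X$; continuity is restored only by composing with the Cantor function, $F=\Phi\circ g$, whose plateaus over the removed intervals $(\alpha^{(n)},\beta^{(n)})$ absorb the gaps in $\mathcal L$ --- exactly the mechanism your last sentence anticipates. What each approach buys: yours is shorter and self-contained, with the density of the dyadics doing the work directly in the two subbasic-preimage identities; the paper's produces a Urysohn function that is surjective onto $[0,1]$ whenever $X$ is connected, and its family $\{U_p\}_{p\in\mathcal L}$ is reused, with the additional property $U_p\cap E=f^{-1}([0,\Phi(p)))$, to prove the Tietze Extension Theorem in Section~\ref{sec3}.
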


Munkres, the author of the popular book \cite{mun}, 
regards the Urysohn Lemma as ``the first deep theorem of the book" 
 (see p. 207 in \cite{mun}). He adds that ``it would take considerably more originality than most of us possess to prove this lemma unless we were given copious hints."  
For the standard proof of Urysohn's Lemma, see \cite[p.~115]{kelley}, \cite[p. 207]{mun} or \cite[p. 102]{will}.  

A function as in Theorem~\ref{th1} is called a {\em Urysohn function}. 
Its existence is crucial to any  of the many approaches to the Tietze Extension Theorem (\cite{blair,grab,ma,ossa,sco}). But, surprisingly, apart from the classical one, it seems that no other constructions of a Urysohn function are known.

We reduce the proof of Theorem~\ref{th1} to the non-trivial case of a connected normal space when 
we construct a new 
Urysohn function (denoted by $F$). This continuous function will take all the values in $[0,1]$ and is obtained by applying the Cantor Staircase function to a non-continuous function from $X$ into the Cantor set. 
Our argument neither relies on nor reduces to the standard proof (see Remark~\ref{rem1}). In turn, if 
$X$ is {\em not} connected, then a Urysohn function for $A$ and $B$ need not take all the values in $[0,1]$. Indeed, if $X=A\cup B$ with $A$ and $B$ distinct connected components of $X$, then  
the only Urysohn function $F$ for $A$ and $B$ is $F=0$ on $A$ and $F=1$ on $B$. Hence, the proof of Theorem~\ref{th1} is simple if $A$ and $B$ belong to different connected components of $X$. The challenging case is when at least one connected component of $X$ has nonempty intersections with both $A$ and $B$. It is precisely this case that we treat with a new method. The general case follows from this one (see Section~\ref{sec11} for details).

We next introduce the Cantor set and Cantor function. 
Algebraically, 
a point $p\in [0,1]$ is in the Cantor set $\mathcal C$ if and only if $p$ has a ternary expansion 
that does not use digit $1$. We thus write $p=0.p_1p_2\ldots p_k\ldots_3$, where 
$ p_k\in \{0,2\}$ for every $k\geq 1$. 
(The subscript $3$ indicates that the expansion is in base $3$.) 
Geometrically, we construct the Cantor set as follows.    
Starting with the interval $[0,1]$, we remove its open middle third interval $(1/3,2/3)$. We apply the same process to the remaining intervals $[0,1/3]$ and $[2/3,1]$. The Cantor set $\mathcal C$ is the set of points in $[0,1]$
that remain after continuing this removal process {\em ad infinitum}. 
At each stage $n\geq 1$ in this construction,  
we remove $2^{n-1}$ open intervals $(\alpha^{(n)},\beta^{(n)})$, where 
\begin{equation} \label{alb}  \alpha^{(n)}=0.\alpha_1^{(n)}\ldots \alpha_{n-1}^{(n)}1_3,\qquad
\beta^{(n)}=0.\alpha_1^{(n)}\ldots \alpha_{n-1}^{(n)}2_{3}.
\end{equation} Here, 
$\alpha_k^{(n)}\in \{0,2\}$ for each $1\leq k\leq n-1$. 
Let $\mathcal L=\cup_{n=1}^\infty \mathcal L_n$, where  
$\mathcal L_n$ 
is the collection of all points $\alpha^{(n)}$ with a ternary representation as in \eqref{alb}. Hence, $\mathcal L_1=\{1/3\}$ and $\mathcal L_2=\{1/9,7/9 \}$. Now, $\mathcal L$ is countable and dense in $\mathcal C$ (but not dense in $[0,1]$). 
All the numbers in $\mathcal L$ are ``endpoints" and right limit points of $\mathcal C$. 
(Each number in $\mathcal L$ has a ternary expansion consisting entirely of $0$s and $2$s. For example, $1 = 0.222\ldots_3$, 
$1/3 = 0.0222\ldots_3$ and $7/9 = 0.20222\ldots_3$.)
 
The Cantor function $\Phi:[0,1]\to [0,1]$  
is continuous, non-decreasing and surjective. It is given by 
$\Phi= \sum_{k=1}^{n-1} \alpha_k^{(n)} 2^{-k-1}+2^{-n}$  
on $  \left[\alpha^{(n)},\beta^{(n)}\right]$ and $\Phi(p)=\sum_{k=1}^\infty p_k 2^{-(k+1)}$ for every $p\in \mathcal C$, where
$p=0.p_1p_2\ldots p_k\ldots_3$ with $ p_k\in \{0,2\}$ for every $k\geq 1$. The binary expansion of any $y\in [0,1]$ 
can be translated into a ternary representation of a number $p\in \mathcal C$ 
by replacing all the $1$s by $2$s. 
Hence, $\Phi(\mathcal C)=[0,1]$ and $\Phi(\mathcal L)=\mathcal D\setminus\{0,1\}$, where  
$\mathcal D$ is the set of all dyadic rationals in $[0,1]$.

\noindent {\bf Notation.} Fix $\alpha^{(n)}\in \mathcal L_{n}$ and $\beta^{(n)}=\alpha^{(n)}+3^{-n}$. 
For $k\geq 1$, let $ q^{(n)}_k:=\alpha^{(n)}-2\cdot 3^{-n-k}$ and 
$ \ell^{(n)}_k:=\beta^{(n)}+3^{-n-k}$. Then, 
$\{q^{(n)}_k\}_{k}$ and $\{\ell^{(n)}_k\}_{k}$ are strictly monotone sequences in $ \mathcal L_{n+k}$ converging to $\alpha^{(n)}$ and 
$\beta^{(n)}$, respectively, satisfying $\Phi(q^{(n)}_k)\nearrow \Phi(\alpha^{(n)})$ and 
$\Phi(\ell^{(n)}_k)\searrow \Phi(\alpha^{(n)})$ as $k\to \infty$. 
If 
 $ p_*=\max\,\{p\in \cup_{j=1}^n \mathcal L_j\cup\{0\}:\   p<\alpha^{(n)} \}$ and  
$ p^{*}=\min\,\{p\in \cup_{j=1}^n \mathcal L_j\cup\{1\}:\   p>\beta^{(n)}  \}$, then   
$p_*,\alpha^{(n)}$ and $p^{*}$ are {\em consecutive} points in $\cup_{j=1}^{n} \mathcal L_{j}\cup\{0,1\}$.

\section{Proof of Urysohn's Lemma} \label{sec11}

Let $A$ and $B$ be disjoint closed subsets of a normal space $X$. 
The proof reduces to the case of a connected space $X$. Indeed, 
the connected components of $X$ form a partition of  $X$: they are disjoint, nonempty, and their union is the whole space. Hence, $X=\cup_{x\in X} \mathcal C(x)$, where $\mathcal C(x)$ is the connected component of $x$. If $x,y\in X$ with $x\not=y$, then either $\mathcal C(x)=\mathcal C(y)$ or $\mathcal C(x)\cap \mathcal C(y)=\emptyset$. Since every connected component is closed and normality is inherited by the closed subspaces of $X$, it follows that the subspace $\mathcal C(x)$ is also normal for every $x\in X$. 
Define $A\cap \mathcal C(x)=A_x$ and $B\cap \mathcal C(x)=B_x$ for each $x\in X$. Then, $A_x$ and $B_x$ are closed and disjoint for every $x\in X$ and, moreover, 
$A=\cup_{x\in X} A_x$ and $B=\cup_{x\in X} B_x$. 
It is enough to construct, for every $x\in X$, a continuous function $F_x:\mathcal C(x)\to [0,1]$ such that $F_x=0$ on $A_x$ and 
$F_x=1$ on $B_x$. (The non-trivial case is when $A_x$ and $B_x$ are nonempty.)   
Then, a Urysohn function for $A$ and $B$ is $F:X\to [0,1]$ given by $F=F_x$ on $\mathcal C(x)$ for each $x\in X$. 

So, if $\mathcal C(x)$, $A_x$, $B_x$ and $F_x$ are relabelled $X$, $A$, $B$ and $F$, respectively, we need only  prove Theorem~\ref{th1} for disjoint closed subsets $A$ and $B$ of a {\em connected} normal space $X$. 

 \vspace{0.1cm}
 \noindent {\bf Step 1.} Set $U_0:=A$ and $U_1:=X\setminus B$. We inductively generate a family $\{U_p\}_{p\in \mathcal L}$ of open neighbourhoods of $A$ such that $\overline{U_p}\subset U_q$ for all $p,q\in \cup_{n\ge 1}\mathcal L_n\cup \{1\}$ with $p<q$. 
 
 For $n=1$, by the normality of $X$, 
the set $U_1$ contains the closure of an open neighbourhood $U_{1/3}$ of 
$A$. Since $X$ is connected, the only nonempty open and closed subset of $X$ is $X$. Thus, we have {\em strict} inclusions $A\subset U_{1/3}$ and $\overline{U_{1/3}}\subset U_1$.  
This verifies Step~1 for $n=1$. 

Fix $n\geq 1$. Assume that $\{U_p\}_{p\in \cup_{j=1}^{n} \mathcal L_j}$ is a family of open neighbourhoods of $A$ satisfying
\begin{equation} \label{two} \tag{$\frak{B}_n$}
\overline{U_p}\subset U_q  \quad \mbox{for all }
p,q\in \cup_{j=1}^{n} {\mathcal L}_j\cup\{1\}  \mbox{ with } p<q. 
\end{equation}   
Let $\alpha^{(n)}\in  \mathcal L_{n}$ be arbitrary. Then,
$q_1^{(n)}\in (p_*,\alpha^{(n)})$ and $\ell_1^{(n)}\in (\alpha^{(n)},p^*)$ are consecutive points in $\mathcal L_{n+1}$. 
By the induction assumption,  
$U_{p^*}$ and $U_{\alpha^{(n)}}$  are open neighbourhoods of $\overline{U_{\alpha^{(n)}}}$ and $\overline{U_{p_*}}$, respectively.  
Thus, $U_{p^*}$ contains the closure of an open neighbourhood 
$U_{\ell_1^{(n)}}$ of $\overline{U_{\alpha^{(n)}}}$, whereas   
$U_{\alpha^{(n)}}$ contains the closure of an open neighbourhood $U_{q_1^{(n)}}$ of $\overline{U_{p_*}}$.    
The collection of all $U_{q_1^{(n)}}$ and $U_{\ell_1^{(n)}}$ obtained by 
varying $\alpha^{(n)}\in \mathcal L_n$ yields the 
family $\{U_q \}_{ q\in  {\mathcal L}_{n+1} }$ of open sets satisfying $(\mathfrak B_{n+1})$.  

\vspace{0.2cm}
\noindent {\bf Step 2.} We define $g=1$ on $X\setminus U_1$, $g=0$ on $A$ and $g(x)=\inf\,\{p\in \mathcal L:\ x\in U_p\}$ for every $x\in U_1\setminus A$. If
$g(x)>p$ for $p\in \mathcal L$, then $x\not\in \overline{U_p}$. Otherwise,
$x\in U_q$ for every $q\in \mathcal L$ with $q>p$. Then, 
$g(x)\leq q$. By letting $q\in \mathcal L$ with $q\searrow p$, we arrive at $g(x)\leq p$, which is a contradiction.

Let $F=\Phi\circ g$ on $X$. 
Then, $F=0$ on $A$ and $F=1$ on $ B$. We prove that $F:X\to [0,1]$ is continuous. 
For any $\zeta\in \mathcal D\setminus\{0,1\}$, there exist $n\geq 1$ and 
$\alpha^{(n)}\in \mathcal L_n$ such that $\zeta=\Phi(\alpha^{(n)})$.  

We have $F^{-1}([0,\zeta))=\cup_{\xi\in \mathcal L\cap (0,\alpha^{(n)})} U_{\xi}$. Indeed, if $x\in U_\xi$ for $\xi\in \mathcal L\cap (0,\alpha^{(n)})$, then $g(x)\leq \xi$, which gives that $F(x)=\Phi(g(x))\leq \Phi(\xi)<\Phi(\alpha^{(n)})=\zeta$. Conversely, 
if $x\in F^{-1}([0,\zeta))$, then  
$F(x)<\Phi(q_k^{(n)})<\zeta$ by taking $k\geq 1$ large enough. Hence, $g(x)<q_k^{(n)}$ so that $x\in U_{q_k^{(n)}}$. 

Similarly, we see that 
$F^{-1}((\zeta,1])=\cup_{ \eta\in 
\mathcal L\cap (\beta^{(n)},1)} (X\setminus \overline{U_{\eta}} )$.  
Indeed, let $x\in X\setminus \overline{U_{\eta}}$ for $\eta\in \mathcal L$ with 
$\eta> \beta^{(n)}$. Then, $g(x)\geq \eta$ and, hence, $F(x)\geq \Phi(\eta)>\zeta$. 
Conversely, if $x\in F^{-1}((\zeta,1])$, then $F(x)>\Phi(\ell^{(n)}_k) > \zeta$ for $k\geq 1$ large enough. We have 
$g(x)> \ell^{(n)}_k$ and, hence, $x\not \in \overline{U_{\ell^{(n)}_k}}$.  

As $\mathcal S=\{[0,\tau),\ (\tau,1]:\ 0<\tau<1 \}$ is a subbase for $[0,1]$ and $\mathcal D$ is dense in $[0,1]$, the continuity of 
$F$ follows using that $F^{-1}([0,\zeta))$ and $F^{-1}((\zeta,1])$ are open for any dyadic 
rational $\zeta$ in $(0,1)$. \qed

\begin{rem} \label{rem1} The standard approach of Urysohn's Lemma comprises three steps: 
(i) construction of a family $\{U_r\}_{r\in \mathcal D}$ of open sets indexed by\footnote{The index set $\mathcal D$ can be any subset of $\mathbb Q$ that is dense in $[0,1]$.} the dyadic rationals $r=j/2^k$ in the interval $[0,1]$ such that $A\subseteq U_0$, $B=X\setminus U_1$ and 
$\overline{U_r}\subseteq U_s$ whenever $r<s$;   
(ii) verification by induction that the family of open sets $\{U_r\}_{r\in \mathcal D}$ has the required properties;  
(iii) construction of the continuous function, namely,   
$f(x)=\inf \{ r:\ x\in U_r\}$ for $x\in X\setminus B$ and $f=1$ on  $ B$. 

In contrast, we reduce the proof to the non-trivial case of a connected normal space $X$ and 
construct our Urysohn function $F:X\to [0,1]$ to be surjective.  
The connectedness of $X$ is used in Step~1 to 
obtain {\em strict} inclusions $\overline {U_p}\subset U_q$ for all $p,q\in \mathcal L\cup \{1\}$ with $p<q$. 
For our family $\{U_p\}_{p\in \mathcal L}$ of open sets, the index set $\mathcal L$ is {\em not} dense in $[0,1]$. It is, however, dense in the Cantor set. 
The continuity of $F(=\Phi\circ g)$ follows essentially as a result of composing the Cantor function with $g$ in Step~2. And 
$g$ is {\em never} continuous on the connected space $X$ since $g$ takes values in the Cantor set (a perfect set that is nowhere dense). 
\end{rem}

\section{Proof of the Tietze Extension Theorem} \label{sec3}

Using our new Urysohn function,  
we give an alternative proof of 
the Tietze Extension Theorem (see Theorem~\ref{tiez}). We use  
the following result, which is easy to establish (see \cite[Lemma~1]{ossa}). 
	
	\begin{lemma}  \label{cons} Let $E$ and $Y$ be closed subspaces in a normal space $X$ and let $U$ be an open neighbourhood of $Y$ in $X$. Let a subset $C$ of $E$ be a closed neighbourhood in $E$ of $Y\cap E$ such that $C\subseteq U\cap E$. Then, $Y$ admits a closed neighbourhood $Z$ that is included in $U$ and $Z\cap E=C$.  
	\end{lemma}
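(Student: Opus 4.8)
The plan is to reduce everything to a single application of normality, after repackaging the hypotheses in the language of interiors. Writing $\operatorname{int}_E$ for the interior operator in the subspace $E$, the assumption that $C$ is a closed neighbourhood in $E$ of $Y\cap E$ means precisely that $C$ is closed in $E$ and that $Y\cap E\subseteq \operatorname{int}_E C=:G$. Since $E$ is closed in $X$, both $C$ and the set $E\setminus G$ are closed in $X$. The idea is to build $Z$ as the union of the prescribed piece $C$ (which pins down the trace on $E$) with the closure of a suitable open neighbourhood $P$ of $Y$; the only delicate point will be to choose $P$ so that it contributes nothing new to the intersection with $E$.

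The crucial observation is that $Y$ is disjoint from the closed set $L:=(E\setminus G)\cup(X\setminus U)$. Indeed, $Y\cap(X\setminus U)=\emptyset$ because $U$ is a neighbourhood of $Y$, while $Y\cap(E\setminus G)=(Y\cap E)\setminus G=\emptyset$ because $Y\cap E\subseteq G$. I would then invoke normality of $X$ for the disjoint closed sets $Y$ and $L$, in its standard reformulation via closed neighbourhoods, to produce an open set $P$ with
\[
Y\subseteq P\subseteq \overline{P}\subseteq X\setminus L=U\cap\bigl(X\setminus(E\setminus G)\bigr).
\]
In particular $\overline{P}\subseteq U$ and $\overline{P}\cap E\subseteq G\subseteq C$.

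Finally I would set $Z:=\overline{P}\cup C$ and check the required properties. Closedness of $Z$ is immediate, since $\overline{P}$ and $C$ are both closed in $X$; the inclusion $Z\subseteq U$ follows from $\overline{P}\subseteq U$ together with $C\subseteq U\cap E$; and $Y\subseteq P\subseteq\operatorname{int}_X Z$ shows that $Z$ is a neighbourhood of $Y$. The one identity carrying the real content, $Z\cap E=C$, is exactly where the careful choice of $P$ pays off: since $C\subseteq E$, one has $Z\cap E=(\overline{P}\cap E)\cup C$, and $\overline{P}\cap E\subseteq C$ by construction, so the extra term is absorbed and $Z\cap E=C$. I expect no further difficulty. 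The whole substance of the lemma lies in arranging a \emph{single} separation that simultaneously keeps $\overline{P}$ inside $U$ and confines its trace on $E$ to $\operatorname{int}_E C$; once $L$ is formed as above, everything else is bookkeeping.
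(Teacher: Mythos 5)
Your proof is correct and complete: every step (closedness of $C$ and $E\setminus\operatorname{int}_E C$ in $X$, disjointness of $Y$ from $L$, the normality separation, and the verification that $Z=\overline{P}\cup C$ is a closed neighbourhood of $Y$ contained in $U$ with $Z\cap E=C$) checks out. The paper itself does not prove this lemma but defers to Ossa's Lemma~1, and your argument --- separating $Y$ from the closed set $(E\setminus\operatorname{int}_E C)\cup(X\setminus U)$ and absorbing the trace $\overline{P}\cap E\subseteq\operatorname{int}_E C$ into $C$ --- is essentially that standard proof, so nothing further is needed.
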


\begin{thm} 
\label{tiez} 
	Let $E$ be a closed subspace of a normal space $X$. Then, every continuous function $f:E\to [0,1]$ can be extended to a continuous function $F:X\to [0,1]$. 
\end{thm}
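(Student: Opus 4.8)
The plan is to reproduce, relative to the closed subspace $E$, the construction of Section~\ref{sec11}: I would manufacture a family $\{U_p\}_{p\in\mathcal L}$ of open subsets of $X$ obeying the same nesting $\overline{U_p}\subset U_q$ for $p<q$, but now with traces on $E$ dictated by $f$. Setting $U_1:=X$, then $g(x):=\inf\{p\in\mathcal L:\ x\in U_p\}$ (with $\inf\emptyset:=1$) and $F:=\Phi\circ g$, the continuity of $F:X\to[0,1]$ would follow \emph{verbatim} from Step~2 of Section~\ref{sec11}, since that argument used only the nesting of the $U_p$ together with the monotonicity of $\Phi$; in particular, no reduction to a connected space is needed here. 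The whole problem is thus to arrange the traces so that $F|_E=f$.

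For this I would impose, for every $p\in\mathcal L$, the two–sided condition $\{x\in E:\ f(x)<\Phi(p)\}\subseteq U_p\cap E\subseteq\{x\in E:\ f(x)\le\Phi(p)\}$, the two sets being open, respectively closed, in $E$ by continuity of $f$; recall $\Phi(\mathcal L)=\mathcal D\setminus\{0,1\}$, so these levels sweep out all dyadic values of $f$. Granting this, a short computation gives $F|_E=f$: for $x\in E$ the left inclusion yields $x\in U_p$ whenever $\Phi(p)>f(x)$, so $g(x)\le\inf\{p\in\mathcal L:\ \Phi(p)>f(x)\}$, while the right inclusion forces $\Phi(p)\ge f(x)$ for every $p$ with $x\in U_p$; as $\Phi$ is continuous and non-decreasing and $\mathcal L$ is dense in $\mathcal C$ from the right, both estimates combine to $\Phi(g(x))=f(x)$.

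The family would be built by the induction of Step~1, with Lemma~\ref{cons} supplying the extension from $E$ into $X$ at each stage. Suppose $\{U_p\}$ is built for $p\in\cup_{j\le n}\mathcal L_j$ with the nesting and the trace condition, and let $q^{(n)}_1<\alpha^{(n)}<\ell^{(n)}_1$ be a new triple in $\mathcal L_{n+1}$ with consecutive old neighbours $p_*$ and $p^*$. To produce $U_{\ell^{(n)}_1}$ I would apply Lemma~\ref{cons} with $Y=\overline{U_{\alpha^{(n)}}}$, ambient open set $U=U_{p^*}$, and prescribed closed trace $C=\{x\in E:\ f(x)\le\Phi(\ell^{(n)}_1)\}$. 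The point is that $\Phi$ is strictly increasing on $\mathcal L\cup\{0,1\}$, so the strict inequalities $\Phi(\alpha^{(n)})<\Phi(\ell^{(n)}_1)<\Phi(p^*)$ give exactly the room the lemma needs: $Y\cap E\subseteq\{f\le\Phi(\alpha^{(n)})\}\subseteq\{f<\Phi(\ell^{(n)}_1)\}\subseteq\mathrm{int}_E C$, so $C$ is a closed neighbourhood in $E$ of $Y\cap E$, and $C\subseteq\{f<\Phi(p^*)\}\subseteq U_{p^*}\cap E$. The lemma returns a closed neighbourhood $Z\subseteq U_{p^*}$ of $\overline{U_{\alpha^{(n)}}}$ with $Z\cap E=C$; taking $U_{\ell^{(n)}_1}:=\mathrm{int}_X Z$ secures the nesting $\overline{U_{\alpha^{(n)}}}\subset U_{\ell^{(n)}_1}\subset\overline{U_{\ell^{(n)}_1}}\subseteq Z\subseteq U_{p^*}$ and the \emph{upper} trace bound $U_{\ell^{(n)}_1}\cap E\subseteq\{f\le\Phi(\ell^{(n)}_1)\}$. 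The point $U_{q^{(n)}_1}$ is obtained symmetrically with $Y=\overline{U_{p_*}}$ and $U=U_{\alpha^{(n)}}$.

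The main obstacle is the \emph{lower} trace inclusion $\{f<\Phi(p)\}\subseteq U_p$, i.e. the matching of the open-in-$E$ sublevel set with the open-in-$X$ set $U_p$. Passing to $U_p=\mathrm{int}_X Z$ only guarantees $U_p\cap E\supseteq\overline{U_{p_*}}\cap E$, which a priori reaches only the \emph{previous} level; points of $\{f<\Phi(p)\}$ lying off the closures built so far may sit on the boundary of $Z$, and indeed no open-in-$X$ set can contain all of $\{f<\Phi(p)\}$ while keeping the equator $\{f=\Phi(p)\}$ out of its closure. I expect this to be where the real work lies, and the strict/non-strict split in the two–sided trace condition is tailored to it: the equator $\{f=\Phi(p)\}$ never has to enter $U_p$, only the open set $\{f<\Phi(p)\}$ does. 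To capture the latter I would exploit the density of $\Phi(\mathcal L)=\mathcal D\setminus\{0,1\}$ in $[0,1]$, which gives $\{f<\Phi(p)\}=\bigcup_{q\in\mathcal L,\,q<p}\{f\le\Phi(q)\}$, and accordingly define the open sets as the cumulative unions $U_p=\bigcup_{q\in\mathcal L,\,q<p}\mathrm{int}_X Z_q$ rather than as a single interior, so that the finitely many levels already built fill $U_p$ up to, but not including, the level $\Phi(p)$ on $E$. Verifying that these unions retain the nesting $\overline{U_p}\subset U_{p^*}$ and the two–sided trace at every stage—seeding a genuine lower bound at the base of the induction and propagating it—is the delicate bookkeeping the proof must complete.
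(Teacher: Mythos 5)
Your strategy is the paper's own (build $\{U_p\}_{p\in\mathcal L}$ via Lemma~\ref{cons}, set $F=\Phi\circ g$, inherit continuity from Step~2 of Section~\ref{sec11}), and your two reductions are sound: Step~2 indeed only needs the nesting plus monotonicity of $\Phi$, and the sandwich condition does force $F|_E=f$. But the proposal does not close the one point on which everything hinges, and the repair you sketch cannot work as stated. Lemma~\ref{cons} prescribes $Z\cap E$; it gives no lower bound on $\mathrm{int}_X Z\cap E$ beyond the fact that $Z$ is a neighbourhood of $Y$, so $\mathrm{int}_X Z\cap E$ is only guaranteed to contain $Y\cap E$, i.e.\ the trace of the \emph{previous} generation. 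Already at the base case ($Y$ essentially $f^{-1}(0)$, $C=\{f\le 1/2\}$) the set $U_{1/3}=\mathrm{int}_X Z$ may meet $E$ in barely more than a neighbourhood of $f^{-1}(0)$, so the lower inclusion $\{f<1/2\}\subseteq U_{1/3}\cap E$ is simply not available; and each term of your cumulative union $U_p=\bigcup_{q<p}\mathrm{int}_X Z_q$ has exactly the same defect, so the union's trace is bounded below only by traces of earlier $U_{q'}$'s --- a circular bound that never bootstraps up to $\{f<\Phi(p)\}$. If the lower inclusion fails, $g$ is too large on part of $E$ and $F|_E\neq f$. A second instance of the same conflation occurs in your hypothesis check: you use $\overline{U_{\alpha^{(n)}}}\cap E\subseteq\{f\le\Phi(\alpha^{(n)})\}$, but the sandwich controls $U_{\alpha^{(n)}}\cap E$, not the trace of its closure, which can be strictly larger.

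The idea you are missing is the paper's complementation trick: never prescribe traces of open sets and never take interiors. In Case~I the paper applies Lemma~\ref{cons} to manufacture the \emph{closed} set $U_p^c$ directly, as a closed neighbourhood of $U_{p^*}^c$ contained in $(\overline{U_{\alpha^{(n)}}})^c$, with its trace prescribed exactly: $U_p^c\cap E=f^{-1}([\Phi(p),1])$, condition \eqref{ops}. The complement $U_p$ is then open for free, and $U_p\cap E=f^{-1}([0,\Phi(p)))$ exactly --- both halves of your sandwich at once, with no interior operation to destroy the lower half. (The paper also first reduces to connected $X$ and to \emph{surjective} $f$, treating general $f$ in Case~II by gluing a Urysohn function on a collar $\overline{V_2}\setminus V_1$; your direct attack on general $f$ would make that split unnecessary, but only if the traces were secured.) To be fair, your instinct about where the real work lies is accurate: even in the paper's formulation, verifying the hypothesis $C\subseteq U\cap E$ of Lemma~\ref{cons} at the inductive step requires that $\overline{U_{\alpha^{(n)}}}\cap E$ not spill above the level $\Phi(\alpha^{(n)})$, an invariant on traces of \emph{closures} that the paper's write-up passes over quickly. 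The structural difference remains decisive, though: with closed-set traces prescribed exactly, that residual check is the only thing left to control, whereas in your interior-based scheme even the traces $U_p\cap E$ themselves are uncontrolled from below, and no choice of bookkeeping recovers them from Lemma~\ref{cons} alone.
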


\begin{proof} As for Urysohn's Lemma, 
we consider each connected component $\mathcal C(x)$ of $X$ and use that $E\cap \mathcal C(x)$ is a closed subset of the normal subspace $\mathcal C(x)$. Hence, it is enough to prove Theorem~\ref{tiez} when $X$ is a connected normal space.

\noindent {\bf Case I.} Let $f:E\to [0,1]$ be a continuous and {\em surjective} function. 
The sets $A=f^{-1}(0)$ and $B=f^{-1}(1)$
	are disjoint and closed in $E$ (and, hence, in $X$). 
Define $U_0=A$ and $U_1=X\setminus B$.
 
	For $Z \subseteq X$, we set $Z^c:=X\setminus Z$. We construct 
 open neighbourhoods $\{U_p\}_{p\in \mathcal L}$ of $A$ as in Step~1 of Urysohn's Lemma and, in addition,    
	$U_p\cap E= f^{-1}([0,\Phi(p)))$ 
		for every $p\in \mathcal L$. 
		More precisely, for each $n\geq 1$, we generate open neighbourhoods $\{U_q\}_{q\in \mathcal L_n}$ of $A$ satisfying 
		\eqref{two} and
		 \begin{equation} \label{ops} \tag{$\frak{D}_n$} U_q^c\cap E= f^{-1}([\Phi(q),1])\quad \mbox{for every }q\in  \mathcal L_n.
	\end{equation}
By Lemma~\ref{cons}, 
$B$ has a closed neighbourhood $U_{1/3}^c$ contained in $A^c$ with  $U_{1/3}^c\cap E=f^{-1}([1/2,1])$. 
This proves the claim for $n=1$.  		
For $n\geq 1$, assume that  
	$\{U_p\}_{p\in \cup_{j=1}^{n} \mathcal L_j}$ is a family of open neighbourhoods of $A$ satisfying 
\eqref{two} and \eqref{ops}. 
For fixed $\alpha^{(n)}\in \mathcal L_{n}$, 	
let $p_*,p^*, q^{(n)}_1$ and $\ell^{(n)}_1$ be as in \S\ref{sec11}. 
Using the induction assumption and Lemma~\ref{cons}, we find that $ U_{p^*}^c$ has a   
closed neighbourhood $U_{\ell_1^{(n)}}^c$ contained in 
 $(\overline{ U_{\alpha^{(n)}}})^c $ and   
$(\mathfrak D_{n+1})$ holds for $q=\ell_1^{(n)}\in \mathcal L_{n+1}$. 
Similarly, $U_{\alpha^{(n)}}^c$ has a 
closed neighbourhood $U_{q_1^{(n)}}^c$ contained in   
$ (\overline{ U_{p_*}})^c $ and $(\mathfrak D_{n+1})$ holds for $q=q_1^{(n)}\in \mathcal L_{n+1}$. 
All $U_{q_1^{(n)}}$ and $U_{\ell_1^{(n)}}$ obtained by varying $\alpha^{(n)}\in \mathcal L_n$ 
yield the family $\{U_q \}_{ q\in  {\mathcal L}_{n+1} }$ satisfying $(\mathfrak B_{n+1})$ and $(\mathfrak D_{n+1})$. 

Let $F:X\to [0,1]$ be our Urysohn function associated to $\{U_p\}_{p\in \mathcal L}$.
For any $\zeta \in \mathcal D\setminus\{0,1\}$, there exist $n\geq 1$ and $\alpha^{(n)}\in \mathcal L_n$ with 
$\zeta=\Phi(\alpha^{(n)})=\Phi(\beta^{(n)})$. By the density of $\mathcal L$ in $\mathcal C$, for every $\rho\in \mathcal L$ with $\rho>\beta^{(n)}$, there exists $\eta\in \mathcal L\cap (\beta^{(n)},\rho)$, which yields that $U_\rho^c\subseteq X\setminus \overline{U_{\eta}}$.  
Then, by Step~2 in \S\ref{sec11}, 
$ F^{-1}((\zeta,1])=\cup \{ U_{\rho}^c:\ \rho\in \mathcal L,\ \rho>\beta^{(n)} \}$ and $F^{-1}([0,\zeta))=\cup \{U_{\xi}:\ \xi\in \mathcal L,\ \xi<\alpha^{(n)} \}$. Thus, $E\cap F^{-1}((\zeta,1])=f^{-1}((\zeta,1])$ and 
$ E\cap F^{-1}([0,\zeta))=f^{-1}([0,\zeta))$.  
These equalities extend to every $\zeta\in (0,1)$ by density of $\mathcal D$ in $[0,1]$. 
Hence, $F:X\to [0,1]$ is a continuous extension of $f$.    

\vspace{0.2cm}
\noindent {\bf Case II.} Let $h:E\to [0,1]$ be any continuous function, where $E\subset X$ is closed. 
If $X$ were to be the only open neighbourhood of $E$, then necessarily,
$\overline{\{x\}}\cap E\not=\emptyset$ 
for every $x\in X\setminus E$. 
Any continuous extension of $h:E\to [0,1]$ to the whole $X$, say $F$, would need to  
be sequentially 
continuous, yielding the following definition:   
for every $x\in X\setminus E$, if $y\in \overline{\{x\}}\cap E$, then  
$F(x)=h(y)$.  

Thus, without loss of generality, we assume that there exists an open neighbourhood $V_2$ of $E$ such that $V_2\subset X$. 
By the normality and connectedness of $X$, there exists an open set $V_1$ such that $E\subset V_1$ and $\overline{V_1}\subset V_2$ (with strict inclusions).  
Urysohn's Lemma gives a continuous function $\varphi:X\to [0,1]$
with 
$\varphi=0$ on $\overline{V_1}$ and $\varphi=1$ on $V_2^c$.
We have $\varphi(\overline{V_2}\setminus V_1)=[0,1]$ by the connectedness of $X$.  
If $f=\varphi$ on $\overline{V_2}\setminus V_1$ and $f=h$ on $E$, then $f:(\overline{V_2}\setminus V_1) \cup E\to [0,1]$ is continuous 
and surjective.   
By Case I, $f$ (and thus $h$) has a continuous extension $F:X\to [0,1]$. 
	\end{proof}

\section{Notes on the Cantor set and Cantor function} \label{sec2}

The Cantor set\footnote{Fleron \cite{fle} noted that Cantor was not the first to uncover general ``Cantor sets."  
	Such sets featured earlier in a paper \cite{smi} of Smith, who discovered and constructed nowhere dense sets with outer content.}  and 
Cantor function are two of Cantor's ingenious creations that go back to 1883. During the years 1879--1884, G. Cantor (1845--1918) gave the first systematic treatment of the point set topology of the real line in a series of papers entitled ``\"Uber unendliche, lineare Punktmannichfaltigkeiten." Among the terms he introduced and still in current use, we mention two: {\em everywhere dense} set and {\em perfect set}. The terminology (but not the concept\footnote{The concept of limit point was conceived by Weierstrass who, without giving it a name,  
	used it between 1865 and 1886 as part of his statement that {\em any infinite bounded set in $n$-dimensional Euclidean space has a limit point} (known as the Bolzano--Weierstrass Theorem).}) 
of ``limit point", along with the notion of derived set, was introduced by Cantor in a paper of 1872. 
 
The Cantor set ranks as one of the most frequently quoted fractal objects in the literature. It emerges again and again in many areas of mathematics from topology, analysis and abstract algebra to fractal geometry \cite{man,va}.  
The Cantor set appeared in a footnote to Cantor's statement \cite{can} that perfect sets need not be everywhere dense. Without any indication on how he came upon it, Cantor noted that this set is an infinite and perfect set that is {\em nowhere dense}\footnote{A set is called nowhere dense if the interior of its closure is empty.} in any interval, regardless of how small it is. 
The first occurrence of the Cantor function is in a letter by Cantor \cite{can2} dated November 1883. 
The Cantor function in \cite{can2} served as a counterexample to Harnack's extension of the Fundamental Theorem of Calculus to discontinuous functions. 

The properties of the Cantor function (also called the Lebesgue function or the Devil's Staircase) are surveyed in \cite{dov}.  
For the history of 
the Cantor set and Cantor function, see 
\cite{fle}.

\end{document}